\numberwithin{equation}{section}
\newtheorem{thm}{Theorem}[section]
\newtheorem{lem}[thm]{Lemma}
\newtheorem{Def}[thm]{Definition}
\theoremstyle{definition}
\DeclareMathOperator{\DIV}{div}
\newcommand{\R}{\mathbb{R}}
\newcommand{\stress}{\mathbb{S}}
\newcommand{\D}{\mathbb{D}}
\newcommand{\p}{\partial}
\newcommand{\symmetric}{\R^{3\times 3}_{\text{sym}}}
\newcommand{\wstar}{\overset{\ast}{\rightharpoonup}}
\newcommand{\doublewidetilde}[1]{{%
  \mathpalette\double@widetilde{#1}%
}}
\newcommand{\double@widetilde}[2]{%
  \sbox\z@{$\m@th#1\widetilde{#2}$}%
  \ht\z@=.9\ht\z@
  \widetilde{\box\z@}%
}
\author{Jakub Wo\'{z}nicki}
\address{Faculty of Mathematics, Informatics and Mechanics, University of Warsaw, Stefana Banacha 2, 02-097 Warsaw, Poland}
\email{jw.woznicki@student.uw.edu.pl}
\thanks{J.W. received support from the National Science Centre (Poland), 2017/27/B/ST1/01569.}
\begin{document}

\title{Mv-strong uniqueness for density dependent, non-Newtonian, incompressible fluids}

\begin{abstract}
We consider density dependent, non-Newtonian, incompressible system with the space being flat torus. The viscious stress in the momentum equation is understood through the rheological law and its connection to the proper convex potential. We define the dissipative measure-valued solutions for the aforementioned equations as well as provide a proof of its existence. The main result of this work is the mv-strong uniqueness of the defined solutions.
\end{abstract}

\keywords{non-homogeneous fluids, incompressible fluids, non-Newtonian fluids, mv-strong uniqueness}

\maketitle

\section{Introduction}

\noindent Recently the measure-valued solutions have experienced a resurgence, as Brenier et al. have proven the weak-strong uniqueness property for measure-valued solutions of incompressible Euler equations in \cite{brenier2011weak}, which were expected to be highly  not unique. The fore-mentioned feature (now usually referred to as mv-strong uniqueness) states that given a strong solution to the system, the weak one necessarily coincides with it. Work of Sheffer \cite{scheffer1993aninviscid} has shown that there is no hope for such a trait even for the distributional solutions of Euler's equations, therefore the main point of \cite{brenier2011weak} is to introduce some kind of energy conservation property from which the single solution would emerge. This "relative energy method" or "relative entropy method" was first used by Dafermos (\cite{dafermos1979thesecond}, \cite{dafermos2016hyperbolic}) in regard to the scalar conservation laws and it is used to describe how two physical systems differ in time, starting from close initial states; its other applications range from stability studies, asymptotic limits (see \cite{christoforou2018relative}, \cite{giesselemann2017stability}, \cite{feireisl2012relative}, \cite{bella2014dimension}) to problems arising from biology (\cite{brezina2018measure}, \cite{demoulini2012weak}, \cite{gwiazda2015weakstrong}, \cite{feireisl2016dissipative}).\\
\\
Language used to describe the measure-valued solutions was introduced by Tartar in \cite{tartar1983thecompensated} or DiPerna in \cite{diperna1985measure}, where they described the solutions with Young measures. Fore-mentioned tool is often used to deal with the weak convergence of approximating sequences. Indeed, the fundamental theorem of Young measures (see for example \cite{rindler2018calcvar}, \cite{ball1989version}) gives a clear result on the weak limits of the sequence $\{f(u_n)\}$ where $f$ is any continuous function (not only a linear one) for bounded sequence $\{u_n\}$. This kind of transformation grants us linearization at the cost of information as we start dealing merely with the measures. Since the classical Young measures only describe oscillations, the framework above may work for the systems where no concentrations appear. To treat this case the so-called generalized Young measures were introduced (\cite{alibert1997nonuniform}), which are the triple of measures describing oscillations, concentrations and concentration angle respectively. Such a way to see this problem was for example used in \cite{brenier2011weak} for the incompressible Euler system.\\
\\
To deal with concentrations also a different approach was introduced in the form of dissipative measure-valued solutions (\cite{feireisl2016dissipative}). Those kinds of solutions consisted of the concentration measure which appeared in the distributional description of the system and the dissipation defect, appearing in the entropy inequality. It differs from the previous considerations in the fact that no measure for the concentration angle is considered. To compensate it, the assumption that the dissipation defect dominates the concentration measure is proposed. Dissipative measure-valued solutions found their use in the recent work by Gwiazda et al. \cite{gwiazda2020dissipative} to describe general conservation laws or for both incompressible and compressible cases of Navier-Stokes together with Euler systems (see an overview of those results in \cite{wiedemann2017fluid}).\\
\\
The main inspiration of this paper is the very recent work by Abbatiello and Feireisl \cite{abbatiello2020onaclass}, in which they prove the existence and the mv-strong uniqueness for non-Newtonian, incompressible fluids. In particular they show the existence of a solution such that its symmetric gradient is merely a measure and the connected concentration measure appears in both momentum equation and energy inequality. The main variation from the aforementioned results is the complete avoidance of the Young measures connected to the oscillations. Only the concentration measure together with dissipation defect are needed to describe the system and the needed bound is achieved with the clever trick in the form of Lemma \ref{tracebound}. We try to generalize this result to the non-homogeneous case, but slightly strengthen the assumptions. The discussion on the reasoning for this is carried out in the Section 1.2 of this paper. The system we are concerned with has been also studied by Wr\'{o}blewska-Kami\'{n}ska in \cite{wroblewska2013unsteady}, where the proof of the existence of the weak-solutions has been provided. The main difference is that the considered N-function connected to the system needed to be supported by the modulus to the power $p \geq \frac{11}{5}$. For the measure-valued solution this constraint might be weakened.\\
\\
The structure of this paper is as follows: in the second section we define the dissipative measure valued solution, in the third section we prove the following existence theorem
\begin{thm}
    Let $u_0\in H(\mathbb{T}^3)$ (where $H$ denotes the closure of the space of smooth, divergence-free functions in $L^2$ norm) and $\rho_0\in L^\infty(\mathbb{T}^3)$ with the bounds
    $$
    0< \rho_*\leq \rho_0\leq \rho^*< \infty
    $$
Then there exists a dissipative measure-valued solution to our system.
\end{thm}
\noindent and the fourth section is devoted to proving the mv-strong uniqueness theorem
\begin{thm}
Let the pair $(\rho, u)$, together with $\stress$ and $m$ be a dissipative measure-valued solution with initial datum $(\rho_0, u_0)$. Then if $P\in C^1([0, T]\times\mathbb{T}^3)$ and $U\in C^1([0, T]\times\mathbb{T}^3; \R^3)$, together with $\widehat{\stress}\in C^1([0, T]\times\mathbb{T}^3; \symmetric)$ is a strong solution with the same initial conditions, then $u = U$, $\rho = P$, $\stress = \widehat{\stress}$ and $m\equiv 0$.
\end{thm}

\subsection{Preliminaries}
The system of equations which we will lay focus on is of the form
\begin{align}\label{equations}
    \p_t{\rho} + \DIV_x(\rho u) = 0\\
    \p_t(\rho u) + \DIV_x(\rho u\otimes u) + \nabla_x p = \DIV_x\stress\label{secondequation}\\
    \DIV_x(u) = 0
\end{align}
where $\rho$ is the mass density, $u$ denotes velocity field, $\stress$ the stress tensor and $p$ is the pressure. Following the work of Abbatiello and Feireisl \cite{abbatiello2020onaclass} we will assume that the viscious stress tensor is related to the symmetrical part of the velocity gradient (denoted $\D u$) through the rheological law
\begin{align}\label{rheolo}
    \stress : \D u = F(\D u) + F^*(\stress)
\end{align}
where
\begin{align}\label{convexfunction}
F:\R^{3\times 3}_{\text{sym}}\rightarrow [0, \infty), \text{ }F(0) = 0, \text{ Dom}(F) = \R^{3\times 3}_{\text{sym}}
\end{align}
is a convex function, $F^*$ is its conjugate, i. e.:
\begin{align}
    F^*(\stress) = \sup_{\D \in \symmetric}(\stress : \D - F(\D))
\end{align}
and $\symmetric$ denotes the space of real, symmetric matrices. Let us recall quickly some of the properties of such a function $F$
\begin{itemize}
    \item it is continuous (from Jensen's inequality it is upper bounded on any simplex and using for example \cite{showalter97monotone} Lemma 7.1 one gets the result),
    \item $F^*$ is lower semi-continuous,
    \item $F^*$ is superlinear i. e.:
    \begin{align}\label{suplin}
    \liminf_{|\stress|\to+\infty}\frac{F^*(\stress)}{|\stress|} = +\infty
    \end{align}
    Indeed for an arbitrary $r > 0$
    $$
    F^*(\stress) \geq \sup_{\D\in \overline{B}(0, r)}(\D : \stress - F(\D)) \geq \sup_{\D\in \overline{B}(0, r)}(\D : \stress - c) \geq \stress : \frac{r\stress}{|\stress|} - c = r|\stress| - c
    $$
\end{itemize}
Additionally, we will assume $F$ to be superlinear
\begin{align}\label{suplinf}
\liminf_{|\D|\to+\infty}\frac{F(\D)}{|\D|} = +\infty
\end{align}

\noindent To avoid any technical difficulties concerning the boundary conditions, we will presume our space to be the flat torus $\mathbb{T}^3$. We denote the initial datum as
\begin{align}
    u(0, x) = u_0(x)\\
    \rho(0, x) = \rho_0(x)
\end{align}
Let us notice that by the virtue of \eqref{convexfunction}
\begin{align}
    \stress\in\p F(\D u)
\end{align}
where $\partial$ denotes the subdifferential of a convex function, or equivalently
$$
\stress : (\D - \D u) \leq F(\D) - F(\D u)
$$ 
for any $\D$.\\
\\
\noindent Our last assumption of technical nature concerns density, namely we assume the existence of two constants $\rho_*$ and $\rho^*$, such that
\begin{align}\label{densityrestraint}
0 < \rho_* \leq \rho_0(x) \leq \rho^* \text{ for almost every }x\in\mathbb{T}^3
\end{align}

\subsection{Remark on the assumptions}
As already mentioned in the introduction the present paper follows in its assumptions \cite{abbatiello2020onaclass}. In their work Abbatiello and Feireisl presuppose that
$$
\liminf_{|\D|\to+\infty}\frac{F(\D)}{|\D|} > 0
$$
The need to strengthen it to \ref{suplinf} comes from the renormalization property of transport equation. Namely, to the knowledge of the author, there is no such result, when the known vector field in the transport equation is only of bounded deformation (which means that its symmetric gradient is a Radon measure, as in \cite{abbatiello2020onaclass}). The result by DiPerna and Lions \cite{diperna1989ordinary} covers the Sobolev spaces case, Ambrosio's in \cite{ambrosio2004transport} the BV functions. Our case, where the symmetric gradient is the $L^1$ function, has been solved in \cite{capuzzo1996onsome}. A similar result is present in \cite{ambrosio2005traces}, where Ambrosio et al. prove the renormalization property for the functions in SBD space with some additional properties. To avoid this kind of change of assumptions one may make use of the Young measure framework hinted at the beginning.

\section{Dissipative measure-valued solution}

\noindent The dissipative measure-valued solutions will be defined in terms of the velocity field $u$, density $\rho$, the stress tensor $\stress$ as well as the additional term connected to possible concentrations: the parameterized family of measures $m$.
\begin{Def}
We say that the pair $(\rho, u)$ is a dissipative measure-valued solution to the problem \eqref{equations} - \eqref{densityrestraint} if 
\begin{align*}
&\rho \in C([0, T]; L^2(\mathbb{T}^3))\cap L^\infty([0, T]\times\mathbb{T}^3)\\
&u\in L^{\infty}((0, T); L^2(\mathbb{T}^3; \R^3))\\
&\D u\in L^1((0, T)\times\mathbb{T}^3; \symmetric),
\end{align*}
there exist
$$
\stress \in L^1((0, T)\times\mathbb{T}^3;\symmetric) 
$$
and a parameterized family of measures $m\in L^\infty((0, T) ;\mathcal{M}(\mathbb{T}^3; \symmetric))$, such that
\begin{itemize}
    \item The bounds on density don't change in time i. e.:
    \begin{align}
        0 < \rho_* \leq \rho(t, x) \leq \rho^*
    \end{align}
    for almost every $(t, x)\in [0, T]\times\mathbb{T}^3$,
    \item Incompressibility\label{incompressibility}
    \begin{align}
        \int_0^T\int_{\mathbb{T}^3}u\cdot\nabla_x\phi\,\mathrm{d}x\mathrm{d}t = 0
    \end{align}
    for every $\phi\in C^1([0, T]\times\mathbb{T}^3)$,
    \item Momentum equation
    \begin{multline}\label{momentumequation}
        \int_0^\tau\int_{\mathbb{T}^3}\rho u\cdot\p_t\phi + \rho u\otimes u:\nabla_x\phi\,\mathrm{d}x\mathrm{d}t - \int_0^\tau\int_{\mathbb{T}^3}\stress:\D\phi\,\mathrm{d}x\mathrm{d}t + \int_0^\tau\int_{\mathbb{T}^3}\nabla_x\phi\,\mathrm{d}m_t\mathrm{d}t\\
        = \int_{\mathbb{T}^3}\rho u\cdot\phi(\tau, x) - \rho_0 u_0\cdot\phi(0, x)\,\mathrm{d}x
    \end{multline}
    for almost every $\tau\in [0, T]$ and divergence-free $\phi\in C^1([0, T]\times\mathbb{T}^3; \R^3)$,
    \item Total mass conservation
    \begin{align}\label{something}
        \int_0^\tau\int_{\mathbb{T}^3}\rho\p_t\phi + \rho u\cdot\nabla_x\phi\,\mathrm{d}x\mathrm{d}t = \int_{\mathbb{T}^3}\rho\phi(\tau, x) - \rho_0\phi(0, x)\,\mathrm{d}x
    \end{align}
    for almost every $\tau\in [0, T]$ and divergence-free $\phi\in C^1([0, T]\times\mathbb{T}^3;\R^3)$,
    \item Energy inequality
    \begin{multline}\label{energyinequality}
        E(\tau) = \frac{1}{2}\int_{\mathbb{T}^3}\rho(\tau,x)|u(\tau, x)|^2\,\mathrm{d}x + \frac{1}{\gamma}\int_{\mathbb{T}^3}\rho(\tau, x)^\gamma\,\mathrm{d}x + D(\tau) + \int_0^\tau\int_{\mathbb{T}^3}F(\D u) + F^*(\stress)\,\mathrm{d}x\mathrm{d}t\\
        \leq \frac{1}{2}\int_{\mathbb{T}^3}\rho_0|u_0(x)|^2\,\mathrm{d}x + \frac{1}{\gamma}\int_{\mathbb{T}^3}\rho_0(x)^\gamma\,\mathrm{d}x
    \end{multline}
    holds for almost every $\tau\in [0, T]$ and some $\gamma > 1$, where
    $$
    D(\tau) = \frac{1}{2}\text{trace}(m_\tau(\mathbb{T}^3))
    $$
    is a dissipation defect,
    \item There exists a constant $C > 0$, such that
    \begin{align}\label{trace}
        |m_\tau|(\mathbb{T}^3) \leq C D(\tau)
    \end{align}
    for almost every $\tau\in [0, T]$.
\end{itemize}
\end{Def}

\section{Existence}

\begin{thm}
    Let $u_0\in H(\mathbb{T}^3)$ (where $H$ denotes the closure of the space of smooth, divergence-free functions in $L^2$ norm) and $\rho_0\in L^\infty(\mathbb{T}^3)$ with the bounds
    $$
    0< \rho_*\leq \rho_0\leq \rho^*< \infty
    $$
Then there exists a dissipative measure-valued solution to our system.
\end{thm}

\noindent The proof is based on the two-step approximation. The first part highly resembles the proof of the Theorem 5.16 \cite{malek1996weak}, p. 267 - 275 and the second part draws inspiration from both \cite{abbatiello2020onaclass} and \cite{wroblewska2013unsteady}.

\subsection{Approximation in $n$ and $\alpha$}
\noindent Let us define
$$
F_{\alpha}(\D) = \inf_{\stress\in\symmetric}{\frac{1}{2\alpha}\|\stress - \D\| + F(\stress)}
$$
which is the Moreau-Yosida approximation of $F$. Due to Moreau's theorem (\cite{showalter97monotone}, Proposition 1.8, p. 162):
\begin{align}
    &F_{\alpha}\text{ converge upwards to }F\text{ as }\alpha\to 0\\
    &\text{The derivative }F_{\alpha}' \text{ is a }\frac{1}{\alpha}-\text{Lipchitz function}
\end{align}
And by the Dini's theorem (\cite{jurgen2005postmodern}, Theorem 12.1, p. 157) we may deduce
\begin{align}\label{uniformconv}
    F_\alpha \nearrow F \text{ unifromly on compact sets }
\end{align}
We can also notice that similarly as for $F$ we have $F_{\alpha}(0) = 0$ and $\text{Dom}(F_\alpha) = \symmetric$. We will use this approximation to get an easier description of the subdifferential (as $F_{\alpha}$ is differentiable). It is also clear from the definition of $F^*$ that since $F_{\alpha} \leq F$ for any $\alpha > 0$ we can infer $F^*\leq F^*_{\alpha}$ for any $\alpha > 0$.  Let us denote by $\{\omega^r\}$ the orthonormal base of $L^2(\mathbb{T}^3;\mathbb{R}^3)$ of smooth, divergence-free functions. We define the approximate problem
\begin{align}\label{galerkin}
    \left\{\begin{array}{ll}
    \p_t\rho^n + \DIV_x(\rho^n\,u^n) = 0\\
    \rho^n(0) = \rho^n_0
    \end{array}\right.
\end{align}
and
\begin{multline}\label{galerkin2}
    \int_{\mathbb{T}^3}\rho^n(t, x)\p_tu^n(t, x)\cdot\omega^r(x)\,\mathrm{d}x + \int_{\mathbb{T}^3}\rho^n(t, x)\,[u^n(t, x)\nabla_xu^n(t, x)]\cdot\omega^r(x)\,\mathrm{d}x\\
    + \int_{\mathbb{T}^3}\stress^n_{\alpha}(t, x) : \D\omega^r(x)\,\mathrm{d}x = 0
\end{multline}
for any $r$, where 
$$
\stress^n_{\alpha}(t, x) = F'_{\alpha}(\D u^n(t,x)) \text{ (the derivative of $F_\alpha$ in the point $\D u^n(t,x)$)}
$$
We will assume that
\begin{align}
    &\rho^n_0 \rightarrow \rho_0\text{ strongly in }L^\infty(\mathbb{T}^3)\\
    & P^n u_0 \rightarrow u_0 \text{ strongly in }H(\mathbb{T}^3)
\end{align}
where $P^n$ is the projection onto first $n$ vectors. To find the approximate solutions we define
$$
B_L(0) = \{a\in C([0, T];\mathbb{R}^n)\,|\,\|a\|_{\infty} \leq L\}
$$
for some $L > 0$. Fix $n\in\mathbb{N}$ and let $\overline{a}\in B_L(0)$ be such that
$$
\overline{a}_k(0) = (u_0, \omega^k)\text{,   }k = 1, 2, ..., n
$$
where $(\cdot,\cdot)$ denotes the scalar product in $L^2$. Set
$$
\overline{u}^n(t, x) = \sum_{k=1}^n\overline{a}_k(t)\omega^k(x)
$$
We will seek the solutions of the system
\begin{align}\label{rhobar}
    \left\{\begin{array}{ll}
    \p_t\rho^n + \DIV_x(\rho^n\,\overline{u}^n) = 0\\
    \rho^n(0) = \rho^n_0
    \end{array}\right.
\end{align}
and
\begin{multline}\label{approximate}
    \int_{\mathbb{T}^3}\rho^n(t, x)\p_tu^n(t, x)\cdot\omega^r(x)\,\mathrm{d}x + \int_{\mathbb{T}^3}\rho^n(t, x)\,[\overline{u}^n(t, x)\nabla_xu^n(t, x)]\cdot\omega^r(x)\,\mathrm{d}x\\
    + \int_{\mathbb{T}^3}\stress^n_{\alpha}(t, x) : \D\omega^r(x)\,\mathrm{d}x = 0
\end{multline}
for
$$
\stress^n_{\alpha}(t, x) = F'_{\alpha}(\D u^n(t,x))
$$
of the form
$$
u^n(t, x) = \sum_{k=1}^na_k(t)\omega^k(x)
$$
We will set the function
\begin{align}\label{functionschauder}
f: \overline{a}\in B_L(0) \longmapsto a
\end{align}
and prove that it fulfills the Schauder fixed-point theorem. Since $\DIV_x(\overline{u}^n) = 0$ we have
\begin{align}
    \left\{\begin{array}{ll}
    \p_t\rho^n + \overline{u}^n\cdot\nabla_x\rho^n = 0\\
    \rho^n(0) = \rho^n_0
    \end{array}\right.
\end{align}
and by the standard characteristics method we obtain a solution $\rho^n \in C^1([0, T]\times \mathbb{T}^3)$. Since
$$
0<\rho_*\leq \rho_0 \leq \rho^*
$$
we can get from the strong convergence that
\begin{align}\label{densitybound}
0 < \rho_* \leq \rho^n(t, x) \leq \rho^*
\end{align}
Having obtained the $\rho^n$ we can dive into the second system. It can be written in the form of the ODE
\begin{align}
    B\cdot\frac{d}{dt}a = h(t, a(t))
\end{align}
where
\begin{align}
    B = [b_{ij}]_{ij} = \left[\int_{\mathbb{T}^3}\rho^n(t, x)\omega^i(x)\cdot\omega^j(x)\,\mathrm{d}x\right]_{ij}
\end{align}
and
$$
h(t, a(t)) = (h_j(t, a(t))_{j=1}^n\text{, }a_0 = (a_{0j})_{j=1}^n
$$
for
\begin{multline*}
    h_j(t, a(t)) = -\int_{\mathbb{T}^3}\rho^n(t, x)\left[\left(\sum_{k=1}^n\overline{a}_k(t)\omega^k(x)\right)\left(\sum_{k=1}^na_k(t)\nabla_x\omega^k(x)\right)\right]\cdot\omega^j(x)\,\mathrm{d}x\\
    - \int_{\mathbb{T}^3}\stress^n_{\alpha}(t,x):\D\omega^j(x)\,\mathrm{d}x
\end{multline*}
\begin{align*}
a_{0j} = (u_0, \omega^j)
\end{align*}
As $B$ is invertible we can write
\begin{align*}
    \left\{\begin{array}{ll}
    \frac{d}{dt}a = B^{-1}\,h(t, a(t))\\
    a(0) = a_0
    \end{array}\right.
\end{align*}
The local solvability of this system is a straight consequence of the Carath\'{e}odory's theorem. For the readers convinience let us recall its formulation
\begin{thm}\textit{(Theorem 3.4, \cite{malek1996weak}, p. 287)}\\
Let F satisfy the Carath\'{e}odory conditions i. e.
\begin{itemize}
    \item $F: (t_0 -\delta, t_0 + \delta)\times \{c\in\mathbb{R}^N\,|\, |c - c_0| < \Delta\}\rightarrow \mathbb{R}^N$, for some $\Delta > 0$,
    \item $F(\cdot, c)$ is a measurable function for all $c\in \{c\in\mathbb{R}^N\,|\, |c - c_0| < \Delta\} = K$,
    \item $F(t, \cdot)$ is a continuous function for almost all $t\in (t_0 -\delta, t_0 + \delta) = I_\delta$,
    \item There exists an integrable function $G:I_\delta \rightarrow \mathbb{R}$ such that
    $$
    |F(t, c)| \leq G(t)
    $$
    for all $(t, c)\in I_\delta\times K$
\end{itemize}
Then there exist $\delta' \in (0, \delta)$ and a continuous function $c: I_{\delta'}\rightarrow \mathbb{R}^N$ such that
\begin{itemize}
    \item $\frac{dc}{dt}$ exists for almost all $t$,
    \item $\frac{d}{dt}c(t) = F(t, c(t))$,
    \item $c(t_0) = c_0$.
\end{itemize}
\end{thm}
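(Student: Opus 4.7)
The plan is to convert the ODE into its integral form
\[
c(t) = c_0 + \int_{t_0}^t F(s, c(s))\,\diff s,
\]
produce a continuous solution as a uniform limit of retarded Tonelli approximations, and then read off differentiability almost everywhere from that identity. First, using the integrable majorant $G$, I would choose $\delta' \in (0, \delta)$ small enough that $\int_{t_0-\delta'}^{t_0+\delta'} G(s)\,\diff s < \Delta$; this guarantees that any curve of the form $c_0 + \int_{t_0}^t \phi(s)\,\diff s$ with $|\phi| \le G$ remains strictly inside the ball $K$ on which $F$ is defined.

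Next, for each $n \in \N$, I would set $c_n(t) = c_0$ for $t \in [t_0 - 1/n, t_0 + 1/n]$ and extend by
\[
c_n(t) = c_0 + \int_{t_0}^{t-1/n} F(s, c_n(s))\,\diff s
\]
for $t \in [t_0 + 1/n, t_0 + \delta']$ (with the obvious mirror formula on the left). Because the integrand at time $s$ invokes $c_n$ only at times $\le t - 1/n$, this propagates $c_n$ through successive intervals of length $1/n$ without any fixed-point argument; each $c_n$ is absolutely continuous, stays in $K$ by the choice of $\delta'$, and satisfies the equicontinuity bound $|c_n(t) - c_n(s)| \le \bigl|\int_s^t G(r)\,\diff r\bigr|$, whose right-hand side is the (uniform in $n$) modulus of absolute continuity of the integral of $G$. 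Arzel\`a--Ascoli then supplies a subsequence converging uniformly on $I_{\delta'}$ to a continuous limit $c$. Uniform convergence together with continuity of $F(s,\cdot)$ gives $F(s, c_n(s)) \to F(s, c(s))$ for almost every $s$, and the common bound $G$ permits Lebesgue dominated convergence, so the defining relation passes to the limit and yields $c(t) = c_0 + \int_{t_0}^t F(s, c(s))\,\diff s$. From this identity, $c(t_0) = c_0$, and $c$ is almost everywhere differentiable with $c'(t) = F(t, c(t))$.

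The one subtlety I expect to be the main obstacle is the measurability of $s \mapsto F(s, c(s))$ when $c$ is merely continuous; this must be justified both to make sense of the Tonelli integrals defining $c_n$ and to apply dominated convergence in the final passage to the limit. The standard remedy is to approximate any continuous $c$ uniformly by step functions $c^{(k)}$: for each such step function the superposition $s \mapsto F(s, c^{(k)}(s))$ is a finite sum of products of indicator functions of measurable time intervals with the measurable profiles $F(\cdot, v)$ for constant $v$, hence measurable, and then continuity of $F$ in its second slot transfers measurability to $F(s, c(s))$ through pointwise almost everywhere convergence. Apart from this single point, the remainder of the argument is routine bookkeeping with the Lebesgue integral and a tame Arzel\`a--Ascoli application.
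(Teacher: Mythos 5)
The paper does not prove this theorem at all: it is recalled verbatim as Theorem~3.4 of \cite{malek1996weak} and then invoked as a black box for the local solvability of the Galerkin ODE system, so there is no proof of record here to compare your argument against. On its own terms, your proof is correct and is the classical Tonelli (delayed Euler) construction. The ingredients are all in place: the choice of $\delta'$ via absolute continuity of $\int G$ keeps every $c_n$ inside $K$; the retardation by $1/n$ allows $c_n$ to be built by direct integration on consecutive subintervals without any fixed point; the equicontinuity estimate (strictly speaking $|c_n(t)-c_n(s)|\le \int_{s-1/n}^{t-1/n}G$, but this has the same uniform modulus of continuity) feeds Arzel\`a--Ascoli; dominated convergence passes the integral relation to the limit --- one should also note that the lag vanishes, since $\bigl|\int_{t-1/n}^{t}G\bigr|\to 0$ uniformly --- and Lebesgue differentiation then gives a.e.\ differentiability with $c'(t)=F(t,c(t))$. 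You correctly flag the one genuine delicacy, measurability of $s\mapsto F(s,c(s))$, and your remedy (uniform approximation of the continuous $c$ by step functions taking values in $K$, measurability of each $F(\cdot,v_j)$, then pointwise a.e.\ convergence via continuity of $F(s,\cdot)$) is the standard and correct one; it must also be applied at each stage of the Tonelli construction to make sense of the integrals defining $c_n$, which works because each $c_n$ is continuous piece by piece.
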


\noindent To obtain the global solution we will show that there exists $L$ big enough and $t^*$ small enough such that
\begin{align}
    &\|a\|_{\infty, (0, t^*)} \leq L\label{estimatea}\\
    &\left\|\frac{d}{dt}a\right\|_{\infty, (0, t^*)}\leq K\label{estimateder}
\end{align}
Since $\{\omega^j\}$ are orthonormal in $L^2$ we can write
$$
|a(t)|^2 = (u^n, u^n) \leq \frac{1}{\rho_*}\int_{\mathbb{T}^3}\rho^n(t, x)|u^n(t,x)|^2\,\mathrm{d}x
$$
Multiplying \eqref{approximate} by $a(t)$ and summing over $r$ we get
\begin{multline}\label{somethingdumbthing}
    \int_{\mathbb{T}^3}\rho^n(t, x)\p_tu^n(t, x)\cdot u^n(t, x)\,\mathrm{d}x + \int_{\mathbb{T}^3}\rho^n(t, x)\,[\overline{u}^n(t, x)\nabla_xu^n(t, x)]\cdot u^n(t, x)\,\mathrm{d}x\\
    + \int_{\mathbb{T}^3}\stress^n_{\alpha}(t, x) : \D u^n(t, x)\,\mathrm{d}x = 0
\end{multline}
Using \eqref{rhobar} we get
\begin{align*}
    &\int_{\mathbb{T}^3}\rho^n(t, x)\,[\overline{u}^n(t, x)\nabla_xu^n(t, x)]\cdot u^n(t, x)\,\mathrm{d}x = \frac{1}{2}\int_{\mathbb{T}^3}\rho^n(t, x)\overline{u}^n(t, x)\nabla_x(|u^n|^2)\,\mathrm{d}x\\
    &= -\frac{1}{2}\int_{\mathbb{T}^3}\DIV_x(\rho^n\overline{u}^n)|u^n(t,x)|^2\,\mathrm{d}x = \frac{1}{2}\int_{\mathbb{T}^3}\p_t\rho^n(t, x)|u^n(t,x)|^2\,\mathrm{d}x
\end{align*}
And by the fact that
$$
\stress^n_{\alpha}\in\p F_{\alpha}(\D u^n)
$$
we have
$$
0\leq \int_{\mathbb{T}^3}F_{\alpha}(\D u^n) + F^{*}_{\alpha}(\stress^n_{\alpha})\,\mathrm{d}x = \int_{\mathbb{T}^3}\stress^n_{\alpha}(t,x) : \D u^n(t,x)\,\mathrm{d}x
$$
Thus we finally arrive at
\begin{multline}
    \frac{1}{2}\frac{d}{dt}\int_{\mathbb{T}^3}\rho^n(t,x)|u^n(t,x)|^2\,\mathrm{d}x\leq \frac{1}{2}\frac{d}{dt}\int_{\mathbb{T}^3}\rho^n(t,x)|u^n(t,x)|^2\,\mathrm{d}x\\ + \int_{\mathbb{T}^3}F_{\alpha}(\D u^n) + F^{*}_{\alpha}(\stress^n_{\alpha})\,\mathrm{d}x = 0
\end{multline}
Which means that
$$
\frac{1}{\rho_*}\int_{\mathbb{T}^3}\rho^n(t,x)|u^n(t,x)|^2\,\mathrm{d}x\leq \frac{1}{\rho_*}\int_{\mathbb{T}^3}\rho^n_0(x)|P^nu_0(x)|^2\,\mathrm{d}x
$$
and we just need to take $L^2$ greater than the expression on the right hand side of the inequality and acquire \eqref{estimatea}. To get \eqref{estimateder} let us notice that similarly as before
$$
\left|\frac{d}{dt}a\right|^2 = \left(\frac{d}{dt}u^n, \frac{d}{dt}u^n\right)\leq\frac{1}{\rho_*}\int_{\mathbb{T}^3}\rho^n(t,x)|\p_t u^n(t,x)|^2\,\mathrm{d}x
$$
Now we multiply \eqref{approximate} by $\frac{d}{dt}a$ to get
\begin{multline*}
    \int_{\mathbb{T}^3}\rho^n(t,x)|\p_t u^n(t,x)|^2\,\mathrm{d}x + \int_{\mathbb{T}^3}\rho^n(t, x)\,[\overline{u}^n(t, x)\nabla_xu^n(t, x)]\cdot \p_tu^n(t, x)\,\mathrm{d}x\\
    + \int_{\mathbb{T}^3}\stress^n_{\alpha}(t, x) : \D (\p_tu^n(t, x))\,\mathrm{d}x = 0
\end{multline*}
Thanks to the \eqref{estimatea} we already know that $\sqrt{\rho^n}\,\overline{u}^n\,\nabla_xu^n$ is a bounded function, thus by the Young's inequality
\begin{align*}
    &\int_{\mathbb{T}^3}\left|\rho^n(t, x)\,[\overline{u}^n(t, x)\nabla_xu^n(t, x)]\cdot \p_tu^n(t, x)\right|\,\mathrm{d}x \leq \frac{\varepsilon}{2}\int_{\mathbb{T}^3}\rho^n(t,x)|\p_t u^n(t,x)|^2\,\mathrm{d}x\\
    &+ \frac{1}{2\varepsilon}\int_{\mathbb{T}^3}\rho^n(t,x)\,|\overline{u}^n(t, x)\nabla_xu^n(t, x)|^2\,\mathrm{d}x \leq \varepsilon\int_{\mathbb{T}^3}\rho^n(t,x)|\p_t u^n(t,x)|^2\,\mathrm{d}x + A
\end{align*}
where $\varepsilon >0$ is some arbitrary, small number and $A >0$ is some constant. Similarly we can estimate
\begin{align*}
    &\int_{\mathbb{T}^3}|\stress^n_{\alpha}(t,x):\D(\p_t u^n(t,x)|\,\mathrm{d}x\leq |\mathbb{T}^3|\,\|\stress^n_{\alpha}\|_{\infty}\max_{k=1, ..., n}|\D\omega^k(x)|\,\left|\frac{d}{dt}a\right|\\
    & \leq R + \rho_*\varepsilon\left|\frac{d}{dt}a\right|^2 \leq R + \varepsilon\int_{\mathbb{T}^3}\rho^n(t,x)|\p_tu^n(t,x)|^2\,\mathrm{d}x
\end{align*}
for some constant $R > 0$ and arbitrarly small $\varepsilon > 0$. Applying those of the inequalities above we arrive at
\begin{align}
    \int_{\mathbb{T}^3}\rho^n(t,x)|\p_tu^n(t,x)|^2\,\mathrm{d}x \leq \frac{A + R}{1 - 2\varepsilon} = K
\end{align}
Due to both \eqref{estimatea} and \eqref{estimateder} we deduce that the function defined in \eqref{functionschauder} indeed fulfills the Schauder fixed point theorem. It means that we have proven the existence of the solution of the system \eqref{galerkin} and \eqref{galerkin2} on some interval $(0, t^*)$. Let $t_0$ be a maximal time for which the solution can be achieved. We would like to show that $t_0 = T$. Assume $t_0 < T$. Then since
$$
\|a\|_{\infty, (0, t_0)} \leq L
$$
we have 
$$
a(t_0) = \lim_{t\to t_0^-}a(t) \leq L
$$
As all of the estimates were global for us, we could extend the same line of thinking to some interval $(t_0, t_0 + \delta)$ which is a contradiction. Let us note that by multiplying \eqref{galerkin} by $\rho^{\gamma - 1}$ and integrating over time and space we get
\begin{align}
    \int_{\mathbb{T}^3}\rho^{n\, \gamma}(\tau, x)\,\mathrm{d}x = \int_{\mathbb{T}^3}\rho^{n\,\gamma}_0(x)\,\mathrm{d}x
\end{align}
and the equality \eqref{somethingdumbthing} implies the energy equality
\begin{multline}\label{energyalpha}
\frac{1}{2}\int_{\mathbb{T}^3}\rho^n(\tau,x)|u^n(\tau,x)|^2\,\mathrm{d}x + \int_0^\tau\int_{\mathbb{T}^3}F_{\alpha}(\D u^n) + F^{*}_{\alpha}(\stress^n_{\alpha})\,\mathrm{d}x\mathrm{d}t\\
= \frac{1}{2}\int_{\mathbb{T}^3}\rho^n_0(x)|P^nu_0(x)|^2\,\mathrm{d}x
\end{multline}
Now we would like to pass to the limit with $\alpha \to 0$. To do this we need to derive some kind of compactness of the $\{\stress^n_{\alpha}\}_{\alpha}$ set. Thanks to the \eqref{energyalpha} we can notice that
\begin{align}
    \int_0^T\int_{\mathbb{T}^3}F^*(\stress^n_{\alpha})\,\mathrm{d}x\mathrm{d}t \leq \int_0^T\int_{\mathbb{T}^3}F^*_{\alpha}(\stress^n_{\alpha})\,\mathrm{d}x\mathrm{d}t \leq \frac{1}{2}\int_{\mathbb{T}^3}\rho^n_0(x)|P^nu_0(x)|^2\,\mathrm{d}x
\end{align}
and due to \eqref{suplin} and the de la Vall\'{e}e Poussin theorem we can deduce the existence of the subsequence such that
\begin{align}\label{weakstressalpha}
    \stress^n_{\alpha} \rightharpoonup \stress^n \text{ in }L^1((0, T) \times \mathbb{T}^3; \symmetric)
\end{align}
First, let us show that indeed
\begin{align}\label{subdiffn}
\stress^n\in\p F(\D u^n)
\end{align}
As
$$
\stress^n_{\alpha}\in F_{\alpha}(\D u^n)
$$
we know that for any $(t, x), D$ the inequality
\begin{align}
    0\leq F_{\alpha}(D) - \stress^n_{\alpha}:(D - \D u^n) - F_{\alpha}(\D u^n)
\end{align}
holds. Multiplying this inequality by the characteristic function $\mathbf{1}_A$ of any measurable set $A$ and integrating over space and time we get
\begin{align}
    0 \leq \int_0^T\int_{\mathbb{T}^3}[F_{\alpha}(D) - \stress^n_{\alpha}:(D - \D u^n) - F_{\alpha}(\D u^n)]\mathbf{1}_A\,\mathrm{d}x\,\mathrm{d}t
\end{align}
Since $\D u^n$ is a continuous function we can use \eqref{uniformconv} which implies $F_{\alpha} \to F$ uniformly in $(t, x)$. Fore-mentioned observation together with \eqref{weakstressalpha} implies
\begin{align}
    0 \leq \int_0^T\int_{\mathbb{T}^3}[F(D) - \stress^n:(D - \D u^n) - F(\D u^n)]\mathbf{1}_A\,\mathrm{d}x\,\mathrm{d}t
\end{align}
Since $A$ is arbitrary we get
$$
0\leq F(D) - \stress^n:(D - \D u^n) - F(\D u^n)
$$
and in consequence \eqref{subdiffn}. We can use the similar procedure (i. e. multiply the equation by the arbitrary characteristic function, converge weakly in $\alpha$ and use the arbitrariness of the characteristic function) to derive from \eqref{galerkin2}
\begin{multline}
    \int_{\mathbb{T}^3}\rho^n(t, x)\p_tu^n(t, x)\cdot\omega^r(x)\,\mathrm{d}x + \int_{\mathbb{T}^3}\rho^n(t, x)\,[u^n(t, x)\nabla_xu^n(t, x)]\cdot\omega^r(x)\,\mathrm{d}x\\
    + \int_{\mathbb{T}^3}\stress^n(t, x) : \D\omega^r(x)\,\mathrm{d}x = 0
\end{multline}
and due to
\begin{multline}
\int_{\mathbb{T}^3}F_{\alpha}(\D u^n) + F^{*}_{\alpha}(\stress^n_{\alpha})\,\mathrm{d}x = \int_{\mathbb{T}^3}\stress^n_{\alpha}(t,x) : \D u^n(t,x)\,\mathrm{d}x\\
\longrightarrow \int_{\mathbb{T}^3}\stress^n(t,x) : \D u^n(t,x)\,\mathrm{d}x = \int_{\mathbb{T}^3}F(\D u^n) + F^{*}(\stress^n)\,\mathrm{d}x
\end{multline}
we can imply the energy equality
\begin{multline}\label{energyn}
\frac{1}{2}\int_{\mathbb{T}^3}\rho^n(\tau, x)|u^n(\tau,x)|^2\,\mathrm{d}x + \int_0^\tau\int_{\mathbb{T}^3}F(\D u^n) + F^{*}(\stress^n)\,\mathrm{d}x\mathrm{d}t\\
= \frac{1}{2}\int_{\mathbb{T}^3}\rho^n_0(x)|P^nu_0(x)|^2\,\mathrm{d}x
\end{multline}

\subsection{Limits in $n$}

\noindent Now we can transition into limiting in $n$. Obviously all of the limits below are up to the subsequence, but since we derive finitely many of them we can find one fulfilling all of them.\\
\\
From the \eqref{energyn} as well as \eqref{densitybound} we derive
\begin{align}
u^n \wstar u \text{ in }L^{\infty}((0, T); L^2(\mathbb{T}^3;\mathbb{R}^3))
\end{align}
thanks to the Banach-Bourbaki-Alaoglu theorem. Similarly to the $\stress^n_{\alpha}$ limit we notice
\begin{align}
    \stress^n \rightharpoonup \stress \text{ in } L^1((0, T)\times\mathbb{T}^3;\symmetric)
\end{align}
and
\begin{align}
    \D u^n \rightharpoonup \D u \text{ in } L^1((0, T)\times\mathbb{T}^3;\symmetric)
\end{align}
As for the density sequence we can use the Banach-Bourbaki-Alaoglu theorem to infer
\begin{align}
    \rho^n \wstar \rho \text{ in } L^\infty((0, T) \times \mathbb{T}^3)
\end{align}
 and thanks to the renormalization property of the transport equation
\begin{align}
    \rho^n \rightarrow \rho \text{ in } C([0, T]; L^2(\mathbb{T}^3))
\end{align}
One may find the detailed information in \cite{wroblewska2013unsteady} Section 3.2, \cite{diperna1989ordinary} and most importantly Section 3 of \cite{capuzzo1996onsome}. Last thing we need to do is to deal with the expression $\rho^n\, u^n\otimes u^n$. We note that
$$
\rho^n\,(u^n\otimes u^n - u\otimes u)
$$
due to the energy equality \eqref{energyn} is bounded in $L^\infty((0, T); L^1(\mathbb{T}^3;\symmetric))$. Using the embedding $L^\infty((0, T); L^1(\mathbb{T}^3;\symmetric)) \hookrightarrow L^\infty((0, T); \mathcal{M}(\mathbb{T}^3;\symmetric))$ we arrive at
\begin{align}
    \rho^n\,(u^n\otimes u^n - u\otimes u) \wstar m \text{ in } L^\infty((0, T); \mathcal{M}(\mathbb{T}^3;\symmetric))
\end{align}
Employing those convergences we get momentum equation, incompressibility condition as well as the energy equality for the density
\begin{align}\label{energydenisty}
    \int_{\mathbb{T}^3}\rho(t,x)^\gamma\,\mathrm{d}x = \int_{\mathbb{T}^3}\rho_0(x)^\gamma\,\mathrm{d}x
\end{align}
Multiplying \eqref{energyn} by a non-negative function $\psi\in C^\infty_c([0, T))$ such that $\psi(0) = 1$ and $\p_t\psi \leq 0$ we get
\begin{multline}
-\int_0^T\p_t\psi\frac{1}{2}\int_{\mathbb{T}^3}\rho^n(t, x)|u^n(t,x)|^2\,\mathrm{d}x\mathrm{d}t + \int_0^T\psi\int_{\mathbb{T}^3}F(\D u^n) + F^{*}(\stress^n)\,\mathrm{d}x\mathrm{d}t\\
= \frac{1}{2}\int_{\mathbb{T}^3}\rho^n_0(x)|P^nu_0(x)|^2\,\mathrm{d}x
\end{multline}
Rewriting
$$
\frac{1}{2}\rho^n|u^n| = \frac{1}{2}\mathrm{tr}[\rho^n(u^n\otimes u^n - u\otimes u)] + \frac{1}{2}\rho^n|u|^2
$$
and using the known convergences
\begin{multline}
    -\int_0^T\p_t\psi\frac{1}{2}\int_{\mathbb{T}^3}\rho^n(t, x)|u^n(t,x)|^2\,\mathrm{d}x\mathrm{d}t\longrightarrow\\
    -\int_0^T\p_t\psi\frac{1}{2}\int_{\mathbb{T}^3}\rho(t, x)|u(t,x)|^2\,\mathrm{d}x\mathrm{d}t
    - \int_0^T\p_t\psi\, D(t)\,\mathrm{d}t
\end{multline}
where
$$
D(t) = \frac{1}{2}\mathrm{tr}(m_t(\mathbb{T}^3))
$$
Now since $F^*$ is l.s.c. and convex we have
$$
\int_0^T\psi\int_{\mathbb{T}^3}F^*(\stress)\,\mathrm{d}x\mathrm{d}t\leq \liminf_{n\to\infty}\int_0^T\psi\int_{\mathbb{T}^3}F^*(\stress^n)\,\mathrm{d}s\mathrm{d}t
$$
Similarly
$$
\int_0^T\psi\int_{\mathbb{T}^3}F(\D u)\,\mathrm{d}x\mathrm{d}t\leq \liminf_{n\to\infty}\int_0^T\psi\int_{\mathbb{T}^3}F(\D u^n)\,\mathrm{d}s\mathrm{d}t
$$
Thus due to the density argument we have an inequality
\begin{multline*}
    -\int_0^T\p_t\phi\frac{1}{2}\int_{\mathbb{T}^3}\rho(t, x)|u(t,x)|^2\,\mathrm{d}x\mathrm{d}t
    - \int_0^T\p_t\phi\, D(t)\,\mathrm{d}t\\
    + \int_0^T\phi\int_{\mathbb{T}^3}F(\D u) + F^{*}(\stress)\,\mathrm{d}x\mathrm{d}t \leq \frac{1}{2}\int_{\mathbb{T}^3}\rho_0(x)|u_0(x)|^2\,\mathrm{d}x
\end{multline*}
for $\phi$ being equal to $1$ on $[0, \tau]$, going linearly to 0 on $[\tau, \tau + \delta]$ and then $0$ on $[\tau +\delta, T]$. Using the Lebesgue's differentiation theorem and adding \eqref{energydenisty} we get the energy inequality.\\
\\
All that is left to prove is \eqref{trace}. To this end let us show the following lemma
\begin{lem}\label{tracebound}
Let $\mu\in \mathcal{M}(\mathbb{T}^3; \symmetric)$ such that
\begin{align}\label{positivesemi}
\int_{\mathbb{T}^3}\phi(x)\xi\otimes\xi\,\mathrm{d}\mu(x) \geq 0
\end{align}
for any $\phi\in C(\mathbb{T}^3)$, $\phi \geq 0$ and $\xi \in \mathbb{R}^3$, then
$$
|\mu|(\mathbb{T}^3) \lesssim \mathrm{trace}(\mu(\mathbb{T}^3))
$$
\end{lem}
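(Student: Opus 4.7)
The plan is to reduce the matrix-valued statement to a pointwise one via the polar (Radon--Nikodym) decomposition of $\mu$, and then invoke the elementary linear algebra fact that for a positive semidefinite symmetric matrix the Frobenius norm is controlled by the trace. Concretely, I would first write $d\mu = A(x)\, d|\mu|(x)$ with $A:\mathbb{T}^3 \to \symmetric$ Borel measurable and $|A(x)| = 1$ for $|\mu|$-a.e.\ $x$ (taking $|\cdot|$ to be the Frobenius norm used to define total variation on symmetric matrices).

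Next, I would upgrade the hypothesis \eqref{positivesemi} to the pointwise statement that $A(x)$ is positive semidefinite for $|\mu|$-a.e.\ $x$. Rewriting \eqref{positivesemi} as
$$\int_{\mathbb{T}^3}\phi(x)\,\xi^\top A(x)\xi\, d|\mu|(x)\geq 0$$
for every nonnegative $\phi \in C(\mathbb{T}^3)$ and every $\xi \in \R^3$, a standard Lebesgue differentiation argument on the torus (approximating indicators of small balls by continuous nonnegative $\phi$) yields $\xi^\top A(x)\xi \geq 0$ at every $|\mu|$-Lebesgue point. Fixing a countable dense set of $\xi$ in $\R^3$ and using continuity of the quadratic form in $\xi$, a single $|\mu|$-null exceptional set works for all $\xi$, so $A(x) \geq 0$ for $|\mu|$-a.e.\ $x$.

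Finally, I would apply the linear algebra bound: for $A \in \symmetric$ positive semidefinite with eigenvalues $\lambda_1, \lambda_2, \lambda_3 \geq 0$ one has
$$|A| \;=\; \sqrt{\lambda_1^2+\lambda_2^2+\lambda_3^2} \;\leq\; \lambda_1+\lambda_2+\lambda_3 \;=\; \mathrm{trace}(A).$$
Combined with $|A(x)| = 1$ $|\mu|$-a.e., this gives $\mathrm{trace}(A(x)) \geq 1$ $|\mu|$-a.e., and integrating yields
$$|\mu|(\mathbb{T}^3) \;=\; \int_{\mathbb{T}^3} 1\, d|\mu| \;\leq\; \int_{\mathbb{T}^3} \mathrm{trace}(A(x))\, d|\mu|(x) \;=\; \mathrm{trace}(\mu(\mathbb{T}^3)),$$
which is the desired bound (with the implicit constant equal to $1$ for the Frobenius choice). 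The only nontrivial step is the passage from the tested inequality against continuous $\phi$ to pointwise positive semidefiniteness of $A(x)$; everything else is either the Radon--Nikodym theorem or a two-line eigenvalue computation.
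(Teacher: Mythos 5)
Your proof is correct, and it takes a genuinely different route from the paper's. The paper argues componentwise: it tests \eqref{positivesemi} with $\xi=e_i$ to see that the diagonal measures $\mu_{ii}$ are nonnegative, then with $\xi=e_i\pm e_j$ to bound $\pm 2\int\phi\,\mathrm{d}\mu_{ij}$ by $\int\phi\,\mathrm{d}\mu_{ii}+\int\phi\,\mathrm{d}\mu_{jj}$, and finally uses the Hahn decomposition of each off-diagonal $\mu_{ij}$ (approximating the indicators of the positive and negative sets by continuous $\phi$) to convert this into $|\mu_{ij}|(\mathbb{T}^3)\leq\mu_{ii}(\mathbb{T}^3)+\mu_{jj}(\mathbb{T}^3)$, summing up to get the constant $4$. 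You instead localize first, via the polar decomposition $\mathrm{d}\mu=A\,\mathrm{d}|\mu|$ and a differentiation (or, even more simply, a Riesz-representation/uniqueness) argument showing $A(x)\geq 0$ for $|\mu|$-a.e.\ $x$, and then apply the pointwise eigenvalue inequality $|A|\leq\mathrm{trace}(A)$ for positive semidefinite $A$. Your version buys a cleaner, dimension-independent constant (equal to $1$ for the Frobenius norm) and generalizes verbatim to $\R^{d\times d}_{\mathrm{sym}}$, at the price of invoking the Radon--Nikodym/polar decomposition and an a.e.\ localization step; the paper's version is more elementary, needing only the Hahn decomposition and finitely many explicit test vectors, but its constant degrades with the dimension. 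Both arguments are sound, and since the claim is stated with $\lesssim$, the choice of norm on $\symmetric$ is immaterial in either case.
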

\begin{proof}
The inequality \eqref{positivesemi} can be written as
\begin{align}
    \sum_{i, j = 1}^3 \int_{\mathbb{T}^3}\phi(x)\xi_i\xi_j\,\mathrm{d}\mu_{ij}(x) \geq 0
\end{align}
Substituting $\xi_i = 1$ and the rest of them equal to $0$ we arrive at
$$
\mu_{ii} \geq 0
$$
Similarly replacing $\xi_ 1 = 1$, $\xi_2 = 1$ and $\xi_3 = 0$ we have
$$
\int_{\mathbb{T}^3}\phi(x)\,\mathrm{d}\mu_{11}(x) + \int_{\mathbb{T}^3}\phi(x)\,\mathrm{d}\mu_{22}(x) + 2\int_{\mathbb{T}^3}\phi(x)\,\mathrm{d}\mu_{12}(x) \geq 0
$$
and with $\xi_1 = -1$, $\xi_2 =1$ and $\xi_3 = 0$
$$
\int_{\mathbb{T}^3}\phi(x)\,\mathrm{d}\mu_{11}(x) + \int_{\mathbb{T}^3}\phi(x)\,\mathrm{d}\mu_{22}(x) - 2\int_{\mathbb{T}^3}\phi(x)\,\mathrm{d}\mu_{12}(x) \geq 0
$$
Denote by $U$, $V$ the Hahn decomposition for $\mu_{12}$ (see for example \cite{rudin2006real}, Theorem 6.14, page 125-126) and let $\phi\in C(\mathbb{T}^3)$, $0\leq \phi\leq 1$ be such that $\|\mathbf{1}_U - \phi\|_1 \leq \varepsilon$, then
\begin{multline*}
\mu^+_{12}(\mathbb{T}^3) = \mu_{12}(U) = \int_{\mathbb{T}^3}\mathbf{1}_{U} - \phi(x)\,\mathrm{d}\mu_{12}(x) + \int_{\mathbb{T}^3}\phi(x)\,\mathrm{d}_{12}(x)\\
\leq \frac{1}{2}\left(\int_{\mathbb{T}^3}\phi(x)\,\mathrm{d}\mu_{11}(x) + \int_{\mathbb{T}^3}\phi(x)\,\mathrm{d}\mu_{22}(x)\right) + \varepsilon \leq \frac{1}{2}\mu_{11}(\mathbb{T}^3) + \frac{1}{2}\mu_{22}(\mathbb{T}^3) + \varepsilon
\end{multline*}
Similarly for $V$ (with the choice of a different $\phi$)
\begin{multline*}
\mu^-_{12}(\mathbb{T}^3) = -\mu_{12}(V) = \int_{\mathbb{T}^3}\phi(x) - \mathbf{1}_V\,\mathrm{d}\mu_{12}(x) - \int_{\mathbb{T}^3}\phi(x)\,\mathrm{d}_{12}(x)\\
\leq \frac{1}{2}\left(\int_{\mathbb{T}^3}\phi(x)\,\mathrm{d}\mu_{11}(x) + \int_{\mathbb{T}^3}\phi(x)\,\mathrm{d}\mu_{22}(x)\right) + \varepsilon \leq \frac{1}{2}\mu_{11}(\mathbb{T}^3) + \frac{1}{2}\mu_{22}(\mathbb{T}^3) + \varepsilon
\end{multline*}
From the arbitrariness of $\varepsilon$
$$
|\mu_{12}|(\mathbb{T}^3) \leq \mu_{11}(\mathbb{T}^3) + \mu_{22}(\mathbb{T}^3)
$$
With the same argument the analogous inequalities can be achieved for $|\mu_{13}|$ and $|\mu_{23}|$ therefore
$$
|\mu|(\mathbb{T}^3) \leq 4\,\mathrm{trace}(\mu(\mathbb{T}^3))
$$
\end{proof}
\noindent Hence to prove \eqref{trace} we need to show \eqref{positivesemi} for $m_\tau$. The proof is similar to \cite{abbatiello2020onaclass} Section 3, but let us repeat the argument given there. We have
\begin{multline*}
\int_0^T\int_{\mathbb{T}^3}\psi(t)\phi(x)\xi\otimes\xi\,\mathrm{d}m = \lim_{n\to\infty}\int_0^T\int_{\mathbb{T}^3}\psi(t)\phi(x)\xi\otimes\xi:\rho_n(u_n\otimes u_n - u\otimes u)\,\mathrm{d}x\mathrm{d}t\\
\geq \liminf_{n\to\infty}\int_0^T\int_{\mathbb{T}^3}\psi(t)\phi(x)\xi\otimes\xi:\rho_n(u_n\otimes u_n)\,\mathrm{d}x\mathrm{d}t - \int_0^T\int_{\mathbb{T}^3}\psi(t)\phi(x)\xi\otimes\xi:\rho(u\otimes u)\,\mathrm{d}x\mathrm{d}t
\end{multline*}
for any $\psi\in C((0, T))$, $\phi\in C(\mathbb{T}^3)$ and $\phi, \psi \geq 0$. We may now write
$$
\xi\otimes\xi : \rho_n(u_n\otimes u_n) = |\xi\,\sqrt{\rho_n}u_n|^2
$$
and from the weak lower semi-continuity (for example in $L^2$) of a convex, non-negative function $z \mapsto |\xi\, z|^2$ it follows that
$$
\liminf_{n\to\infty}\int_0^T\int_{\mathbb{T}^3}\psi(t)\phi(x)\xi\otimes\xi:\rho_n(u_n\otimes u_n)\,\mathrm{d}x\mathrm{d}t \geq \int_0^T\int_{\mathbb{T}^3}\psi(t)\phi(x)\xi\otimes\xi:\rho(u\otimes u)\,\mathrm{d}x\mathrm{d}t
$$
Thus
$$
\int_{\mathbb{T}^3}\phi(x)\xi\otimes\xi\,\mathrm{d}m_\tau\geq 0
$$
for almost every $\tau$, which is what we wanted to prove.\qed

\section{Mv-strong uniqueness}

\begin{thm}
Let the pair $(\rho, u)$, together with $\stress$ and $m$ be a dissipative measure-valued solution with initial datum $(\rho_0, u_0)$. Then if $P\in C^1([0, T]\times\mathbb{T}^3)$ and $U\in C^1([0, T]\times\mathbb{T}^3; \R^3)$, together with $\widehat{\stress}\in C^1([0, T]\times\mathbb{T}^3; \symmetric)$ is a strong solution with the same initial conditions, then $u = U$, $\rho = P$, $\stress = \widehat{\stress}$ and $m\equiv 0$.
\end{thm}
\begin{proof}
In order to prove the theorem above we will define the relative energy and use Gr\"{o}nwall's inequality to prove that it's equal to $0$. We define
\begin{multline}
    E_{\text{rel}}(\tau) = \frac{1}{2}\int_{\mathbb{T}^3}\rho(\tau,x)|u - U|^2(\tau, x)\,\mathrm{d}x\\
    + \int_{\mathbb{T}^3}\frac{1}{\gamma}\rho(\tau, x)^\gamma - P(\tau, x)^{\gamma -1}\rho(\tau, x) + \frac{\gamma-1}{\gamma}P(\tau, x)^\gamma\,\mathrm{d}x + D(\tau)
\end{multline}
To get the appropriate inequality let us work backwards from the energy inequality \eqref{energyinequality}. First, let us notice that by substituting $U$ in \eqref{momentumequation} we arrive at
\begin{multline}\label{momentumstrong}
        \int_0^\tau\int_{\mathbb{T}^3}\rho u\cdot\p_tU + \rho u\otimes u:\nabla_xU\,\mathrm{d}x\mathrm{d}t - \int_0^\tau\int_{\mathbb{T}^3}\stress:\D U\,\mathrm{d}x\mathrm{d}t + \int_0^\tau\int_{\mathbb{T}^3}\nabla_xU\,\mathrm{d}m_t\mathrm{d}t\\
        = \int_{\mathbb{T}^3}\rho u\cdot U(\tau, x) - \rho_0 u_0\cdot U(0, x)\,\mathrm{d}x
\end{multline}
And by the same operation on \eqref{something}, but with $\phi = \frac{1}{2}|U|^2$
\begin{align}\label{somethingstrong}
        \int_0^\tau\int_{\mathbb{T}^3}\rho\p_t\frac{1}{2}|U|^2 + \rho u\cdot\nabla_x\frac{1}{2}|U|^2\,\mathrm{d}x\mathrm{d}t = \int_{\mathbb{T}^3}\rho\frac{1}{2}|U|^2(\tau, x) - \rho_0\frac{1}{2}|U|^2(0, x)\,\mathrm{d}x
\end{align}
Using \eqref{momentumstrong}, \eqref{somethingstrong} together with the energy inequality \eqref{energyinequality} we have 
\begin{align*}
    &\frac{1}{2}\int_{\mathbb{T}^3}\rho(\tau,x)|u - U|^2(\tau, x)\,\mathrm{d}x + \frac{1}{\gamma}\int_{\mathbb{T}^3}\rho(\tau, x)^\gamma\,\mathrm{d}x + D(\tau) + \int_0^\tau\int_{\mathbb{T}^3}F(\D u) + F^*(\stress)\,\mathrm{d}x\mathrm{d}t\\
    &\leq -\int_0^\tau\int_{\mathbb{T}^3}[\rho u\cdot\p_tU + \rho u\otimes u:\nabla_xU]\,\mathrm{d}x\mathrm{d}t + \int_0^\tau\int_{\mathbb{T}^3}\stress:\D U\,\mathrm{d}x\mathrm{d}t - \int_0^\tau\int_{\mathbb{T}^3}\nabla_xU\,\mathrm{d}m_t\mathrm{d}t\\
    & + \int_0^\tau\int_{\mathbb{T}^3}\rho\p_tU\cdot U + \rho u\cdot (U\cdot\nabla_xU)\,\mathrm{d}x\mathrm{d}t + \frac{1}{\gamma}\int_{\mathbb{T}^3}\rho_0(x)^\gamma\,\mathrm{d}x
\end{align*}
Similarly let us substitute $\phi = P^{\gamma - 1}$ in the \eqref{something}
\begin{align}\label{somethingdensity}
    \int_0^\tau\int_{\mathbb{T}^3}\rho\p_tP^{\gamma -1} + \rho u\cdot\nabla_xP^{\gamma - 1}\,\mathrm{d}x\mathrm{d}t = \int_{\mathbb{T}^3}\rho P^{\gamma - 1}(\tau, x) - \rho_0 P^{\gamma - 1}(0, x)\,\mathrm{d}x
\end{align}
Now after applying \eqref{somethingdensity} as well as the equality
$$
\int_{\mathbb{T}^3}P^\gamma(\tau, x)\,\mathrm{d}x = \int_{\mathbb{T}^3}\rho_0^\gamma(x)\,\mathrm{d}x
$$
we get
\begin{align*}
    &E_{\text{rel}}(\tau) + \int_0^\tau\int_{\mathbb{T}^3}F(\D u) + F^*(\stress)\,\mathrm{d}x\mathrm{d}t \leq -\int_0^\tau\int_{\mathbb{T}^3}[\rho u\cdot\p_tU + \rho u\otimes u:\nabla_xU]\,\mathrm{d}x\mathrm{d}t\\
    &+ \int_0^\tau\int_{\mathbb{T}^3}\stress:\D U\,\mathrm{d}x\mathrm{d}t - \int_0^\tau\int_{\mathbb{T}^3}\nabla_xU\,\mathrm{d}m_t\mathrm{d}t + \int_0^\tau\int_{\mathbb{T}^3}\rho\p_tU\cdot U + \rho u\cdot (U\cdot\nabla_xU)\,\mathrm{d}x\mathrm{d}t\\
    & - \int_0^\tau\int_{\mathbb{T}^3}[\rho\p_tP^{\gamma - 1} + \rho u\cdot\nabla_xP^{\gamma -1}]\,\mathrm{d}x\mathrm{d}t
\end{align*}
Let us handle the last term. We have
\begin{align*}
    &- \int_0^\tau\int_{\mathbb{T}^3}[\rho\p_tP^{\gamma - 1} + \rho u\cdot\nabla_xP^{\gamma -1}]\,\mathrm{d}x\mathrm{d}t
     = - \int_0^\tau\int_{\mathbb{T}^3}\rho(\p_tP^{\gamma - 1} + U\cdot\nabla_xP^{\gamma-1})\,\mathrm{d}x\,\mathrm{d}t\\
     & + \int_0^\tau\int_{\mathbb{T}^3}\rho(U - u)\cdot\nabla_xP^{\gamma - 1}\,\mathrm{d}x\mathrm{d}t = -\int_0^\tau\int_{\mathbb{T}^3}\rho(\p_tP^{\gamma -1} + U\cdot\nabla_xP^{\gamma - 1})\,\mathrm{d}x\,\mathrm{d}t\\
     &+ \int_0^\tau\int_{\mathbb{T}^3}P(U -u)\cdot\nabla_xP^{\gamma - 1}\,\mathrm{d}x\mathrm{d}t + \int_0^\tau\int_{\mathbb{T}^3}(\rho - P)(U-u)\cdot\nabla_xP^{\gamma - 1}\,\mathrm{d}x\mathrm{d}t
\end{align*}
By the virtue of \eqref{equations} we have
\begin{align*}
    (\p_tP^{\gamma - 1} + U\cdot\nabla_xP^{\gamma - 1}) = (\gamma - 1)P^{\gamma - 2} (\p_tP + U\cdot\nabla_xP) = 0
\end{align*}
and by \eqref{incompressibility}
$$
\int_0^\tau\int_{\mathbb{T}^3}P(U - u)\cdot\nabla_xP^{\gamma - 1}\,\mathrm{d}x\mathrm{d}t = \frac{\gamma - 1}{\gamma}\int_0^\tau\int_{\mathbb{T}^3}(U - u)\cdot\nabla_xP^{\gamma}\,\mathrm{d}x\mathrm{d}t = 0
$$
Using the inequality between geometric mean and root mean square we get
\begin{align}
\left|\int_0^\tau\int_{\mathbb{T}^3}(\rho - P)(U-u)\cdot\nabla_xP\,\mathrm{d}x\mathrm{d}t\right| \leq C\int_0^\tau \int_{\mathbb{T}^3}|\rho - P|^2 + |U - u|^2\,\mathrm{d}x\mathrm{d}t
\end{align}
for some constant $C >0$. Since we work on a compact set for densities it is easy to check that there exists a constant $B>0$ such that
$$
(\rho - P)^2 \leq B\left(\frac{1}{\gamma}\rho^\gamma - P^{\gamma -1}\rho + \frac{\gamma-1}{\gamma}P^\gamma\right)
$$
Hence
\begin{align}\label{geaomean}
    C\int_0^\tau \int_{\mathbb{T}^3}|\rho - P|^2 + |U - u|^2\,\mathrm{d}x\mathrm{d}t \lesssim \int_0^\tau E_{\text{rel}}(t)\,\mathrm{d}t
\end{align}
\noindent To treat the first terms we write
\begin{align*}
    &-\int_0^\tau\int_{\mathbb{T}^3}[\rho u\cdot\p_tU + \rho u\otimes u:\nabla_xU]\,\mathrm{d}x\mathrm{d}t +  + \int_0^\tau\int_{\mathbb{T}^3}\rho\p_tU\cdot U + \rho u\cdot (U\cdot\nabla_xU)\,\mathrm{d}x\mathrm{d}t\\
    &= \int_0^\tau\int_{\mathbb{T}^3}\rho(U-u)\cdot\p_tU + \rho(U-u)\otimes u:\nabla_xU\,\mathrm{d}x\mathrm{d}t\\
    &= \int_0^\tau\int_{\mathbb{T}^3}\rho(U-u)\cdot\p_tU + \rho(U-u)\otimes U:\nabla_xU\,\mathrm{d}x\mathrm{d}t + \int_0^\tau\int_{\mathbb{T}^3}\rho(U-u)\otimes(u-U):\nabla_xU\,\mathrm{d}x\mathrm{d}t\\
    &= \int_0^\tau\int_{\mathbb{T}^3}\rho(U-u)\cdot(\p_tU + \nabla_xU\cdot U)\,\mathrm{d}x\mathrm{d}t + \int_0^\tau\int_{\mathbb{T}^3}\rho(U-u)\otimes(u-U):\nabla_xU\,\mathrm{d}x\mathrm{d}t
\end{align*}
With the use of \eqref{secondequation} we arrive at
\begin{align*}
    &\int_0^\tau\int_{\mathbb{T}^3}\rho(U-u)\cdot(\p_tU + \nabla_xU\cdot U)\,\mathrm{d}x\mathrm{d}t + \int_0^\tau\int_{\mathbb{T}^3}\rho(U-u)\otimes(u-U):\nabla_xU\,\mathrm{d}x\mathrm{d}t\\
    &= \int_0^\tau\int_{\mathbb{T}^3}\frac{\rho}{P}(U-u)\DIV(\widehat{\stress})\,\mathrm{d}x\mathrm{d}t + \int_0^\tau\int_{\mathbb{T}^3}\rho(U-u)\otimes(u-U):\nabla_xU\,\mathrm{d}x\mathrm{d}t
\end{align*}
For the second term we can see that
\begin{align*}
    \left|\int_0^\tau\int_{\mathbb{T}^3}\rho(U-u)\otimes(u-U):\nabla_xU\,\mathrm{d}x\mathrm{d}t\right| \leq \|\nabla_xU\|_\infty \cdot \int_0^\tau\int_{\mathbb{T}^3}\rho|u-U|^2\,\mathrm{d}x\mathrm{d}t
\end{align*}
and for the first one
\begin{align*}
    \int_0^\tau\int_{\mathbb{T}^3}\frac{\rho}{P}(U-u)\DIV(\widehat{\stress})\,\mathrm{d}x\mathrm{d}t = \int_0^\tau\int_{\mathbb{T}^3}\frac{\DIV\widehat{\stress}}{P}(U-u)(\rho - P)\,\mathrm{d}x\mathrm{d}t + \int_0^\tau\int_{\mathbb{T}^3}(U-u)\DIV(\widehat{\stress})\,\mathrm{d}x\mathrm{d}t
\end{align*}
Similarly as in \eqref{geaomean}
\begin{align}\label{second}
    \left|\int_0^\tau\int_{\mathbb{T}^3}\frac{\DIV\widehat{\stress}}{P}(U-u)(\rho - P)\,\mathrm{d}x\mathrm{d}t\right| \leq C\int_0^\tau E_{\text{rel}}(t)\,\mathrm{d}t
\end{align}
The last thing that is needed is to show
\begin{align}\label{third}
    \int_0^\tau\int_{\mathbb{T}^3}F(\D u)+F^*(\stress) - \stress : \D u\,\mathrm{d}x\mathrm{d}t + \int_0^\tau\int_{\mathbb{T}^3}(u-U)\DIV\widehat{\stress}\,\mathrm{d}x\mathrm{d}t \geq 0
\end{align}
A straight forward calculation gives us 
\begin{multline*}
     \int_0^\tau\int_{\mathbb{T}^3}F(\D u)+F^*(\stress) - \stress : \D u\,\mathrm{d}x\mathrm{d}t + \int_0^\tau\int_{\mathbb{T}^3}(u-U)\DIV\widehat{\stress}\,\mathrm{d}x\mathrm{d}t\\
     = \int_0^\tau\int_{\mathbb{T}^3}F(\D u) - \widehat{\stress}:(\D u - \D U) - F(\D U)\,\mathrm{d}x\mathrm{d}t + \int_0^\tau\int_{\mathbb{T}^3}F(\D U) + F^*(\stress) - \stress : \D U\,\mathrm{d}x\mathrm{d}t
\end{multline*}
By the virtue of Fenchel - Young inequality
\begin{align*}
     \int_0^\tau\int_{\mathbb{T}^3}F(\D U) + F^*(\stress) - \stress : \D U\,\mathrm{d}x\mathrm{d}t \geq 0
\end{align*}
and since $\widehat{\stress}\in\p F(\D U)$ we may deduce
\begin{align*}
    \int_0^\tau\int_{\mathbb{T}^3}F(\D u) - \widehat{\stress}:(\D u - \D U) - F(\D U)\,\mathrm{d}x\mathrm{d}t\geq 0
\end{align*}
Finally, combining \eqref{geaomean}, \eqref{second} and \eqref{third}, we arrive at
\begin{align*}
    E_{\text{rel}}(\tau) \leq C\int_0^\tau E_{\text{rel}}(t)\,\mathrm{d}t + \|\nabla_xU\|_{\infty} \int_0^\tau |m_t|(\mathbb{T}^3)\,\mathrm{d}t
\end{align*}
Which, by \eqref{trace}, implies
\begin{align*}
     E_{\text{rel}}(\tau) \leq C\int_0^\tau E_{\text{rel}}(t)\,\mathrm{d}t + C^{'}\|\nabla_xU\|_{\infty} \int_0^\tau D(t)\,\mathrm{d}t \lesssim \int_0^\tau E_{\text{rel}}(t)\,\mathrm{d}t
\end{align*}
Utilizing Gr\"{o}nwall's inequality we get $\rho = P$, $u = U$, $m\equiv 0$ and in the end $\stress = \widehat{\stress}$, what ends the proof.
\end{proof}

\end{document}